\def\<{\langle}
\def\>{\rangle}
\def\-{\overline}
\newtheorem{theorem}{Theorem}[section]
\newtheorem{lemma}[theorem]{Lemma}
\newtheorem{remark}[theorem]{Remark}
\begin{document}
\def\jump{\vskip 0.3cm}

\author{Peter Scott}
\address{Mathematics Department\\
University of Michigan\\
Ann Arbor, Michigan 48109, USA.\\
email: pscott@umich.edu}
\author{Hamish Short}
\address{L.A.T.P. UMR 7353, Universit\'e d'Aix--Marseille, 39 rue Joliot
Curie, Marseille 13453, France.\\
email: hamish.short@latp.univ-mrs.fr}
\title[Homeo. problem for $3$--manifolds]{The homeomorphism problem for
closed $3$--manifolds}
\subjclass{57M50, 57M99 (Primary) 20F65 (Secondary)}
\keywords{hyperbolic manifolds, decision problems}
\maketitle

\begin{abstract}
We give a more geometric approach to an algorithm for deciding whether two
hyperbolic $3$--manifolds are homeomorphic. We also give a more algebraic
approach to the homeomorphism problem for geometric, but non-hyperbolic, $3$%
--manifolds.
\end{abstract}

%\centerline{6th November}

%\hoffset -2cm

%\textwidth 12cm\textheigh 

\section{Introduction}

The homeomorphism problem for closed orientable triangulated $3$--manifolds
has been studied for many years, with partial results by many authors. The
work of Perelman \cite{Perelman1, Perelman2, Perelman3} proving Thurston's
Geometrization Conjecture finally allowed a complete solution for
irreducible such manifolds, which has been described by Jaco in his Beijing
Lectures \cite{J}, and also in the first chapter of the book by Bessi\`{e}%
res et al \cite{BBMBP}. The work of many previous authors is put together,
and different algorithms are used to deal with the Haken case, to find the
JSJ decomposition, and to deal with different geometries. In the case of two
closed hyperbolic $3$--manifolds $M_{1}$ and $M_{2}$, Mostow rigidity tells
us that $M_{1}$ and $M_{2}$ are homeomorphic if and only if their
fundamental groups are isomorphic. Thus the homeomorphism problem for $M_{1}$
and $M_{2}$ can be solved by appealing to Sela's solution \cite{S} of the
isomorphism problem for torsion-free word hyperbolic groups. The initial aim
of this paper was to give a more geometric approach to the homeomorphism
problem in this case, which avoids quoting Sela's work. But in addition, we
also give a more algebraic approach to some other parts of the homeomorphism
problem, though the geometric results of Jaco and Oertel \cite{JO} are still
needed for the existence of incompressible surfaces. We are not claiming
that the algorithms we present are in any way superior to those referred to
by Jaco and by Bessi\`{e}res et al. Our aim is simply to increase the range
of applicable algorithms from which to choose. In this paper, we will mostly
consider closed, orientable and irreducible $3$--manifolds. It is known that
given two triangulated closed orientable $3$--manifolds $M_{1}$ and $M_{2}$,
there is an algorithm to find the geometric structures on the geometric
pieces of each, and then there is an algorithm to decide whether or not the
pieces of $M_{1}$ are homeomorphic to the pieces of $M_{2}$ (see e.g. \cite[%
1.4]{M}). This algorithm is described in \cite[1.4.1]{BBMBP}, where it is
claimed that this solves the homeomorphism problem for triangulated closed
orientable $3$--manifolds. (A similar claim is made by Matveev in his book 
\cite[p.214]{Mat}, using his method of spines for Haken manifolds.) It was
pointed out to us by Henry Wilton, that there remains an orientation problem
when considering connected sums. If $M$ and $N$ denote two closed orientable 
$3$--manifolds, and $\overline{N}$ denotes $N$ with the opposite
orientation, it is not clear how to decide whether the connected sum $M\#%
\overline{N}$ is homeomorphic to $M\#N$ (though the geometric pieces are
clearly homeomorphic).

In \cite{M}, Manning gave an algorithm to decide if there exists a
hyperbolic structure on a closed orientable $3$--manifold given by a
triangulation. If there is such a structure, then Manning's algorithm
constructs a finite sided polyhedral fundamental region in hyperbolic $3$%
--space. In section 2 of this paper we use Manning's work to give a new
algorithm to decide whether or not two closed $3$--manifolds $M_{1}$ and $%
M_{2}$ are homeomorphic, when they are given by triangulations and known to
be hyperbolic. Manning's algorithm will construct a finite sided polyhedral
fundamental region $P_{i}$ in $\mathbb{H}^{3}$ for each manifold $M_{i}$,
and we show how to estimate how many copies of $P_{2}$ must be glued
together to contain $P_{1}$ and vice versa. This allows one to bound the
number of maps from the generators of $\pi _{1}(M_{1})$ to $\pi _{1}(M_{2})$
which might give rise to an isomorphism, and vice versa, enabling one to
check each such map in turn and decide whether or not it is an isomorphism.

In section 3 of this paper, we take a more algebraic point of view. The main
ingredients are the existence of a bi-automatic structure on the fundamental
groups of most geometric $3$--manifolds and the fact that Perelman's work
essentially reduces the homeomorphism problem for closed orientable
irreducible $3$--manifolds to the isomorphism problem for their fundamental
groups (with the notable exception of lens spaces).

Given a triangulated closed orientable 3-manifold, Jaco and Tollefson \cite[%
\S 7]{JT} give an algorithm producing a decomposition into irreducible
pieces. When the manifold is Haken, they give \cite[\S 8]{JT} algorithms for
finding its JSJ decomposition and finding the Seifert invariants for each
Seifert piece. Thus for a Haken closed Seifert fibre space, their algorithm
will yield all the Seifert invariants and hence a geometric structure. Now
suppose $M$ is a closed orientable $3$--manifold which is known to satisfy
Thurston's Geometrization Conjecture. In \cite[1.4]{M}, Manning combined the
algorithms in \cite{JT} with his algorithm for finding hyperbolic structures
to obtain an algorithm which finds the geometric structures on all the
pieces of $M$. In this section we describe a somewhat simpler algorithm,
with substantial algebraic ingredients, which can answer a somewhat simpler
question. Namely, if $M$ is a closed orientable $3$--manifold which is known
to be geometric, then our algorithm can decide on which geometry $M\ $is
modelled. To answer this simpler question, we do not need Manning's
algorithm for hyperbolic manifolds, and our procedure differs from that of
Manning in the other cases. One natural application of our algorithm would
be when $M$ is orientable and irreducible and not Haken, as then Perelman's
work implies that $M$ must be geometric. Our approach also works in the
Haken case and is rather different from that in \cite{JT}. Our algorithm
starts by finding a bi--automatic structure for the fundamental group of $M$%
, when such a structure exists. As for many algorithms for $3$--manifolds,
the main topological ingredient is an algorithm due to Jaco and Oertel \cite%
{JO} to decide whether a triangulated $3$--manifold is Haken, and if so, to
produce embedded incompressible surfaces.

\section{A new algorithm to decide if two hyperbolic $3$--manifolds are
homeomorphic}

Let $M_{1}$ and $M_{2}$ be closed $3$--manifolds each given by a
triangulation and known to be hyperbolic. It is straightforward to use these
triangulations to write down finite presentations for $\pi _{1}(M_{1})$ and $%
\pi _{1}(M_{2})$. In order to apply Manning's algorithm in \cite{M}, we
first need an algorithm for solving the word problem in each $\pi
_{1}(M_{i}) $. As $M_{i}$ is known to be hyperbolic, its fundamental group
is word hyperbolic and so bi-automatic. Then, the algorithm of \cite[3.4.1]%
{DBAE} (extended to the bi-automatic case, as stated in \cite{GS} after
Lemma 8.2) produces a bi--automatic structure. Alternatively Papasoglu \cite%
{P} gives an algorithm for calculating the hyperbolicity constant $\delta $
of the presentation, and with that a solution to the word problem is easily
built as in \cite{ABC}. It follows that there is an algorithm which can be
applied to the given presentation for $\pi _{1}(M_{i})$ which will find the
bi-automatic structure. (We note that chapter 5 of \cite{DBAE} gives the
algorithm to find automatic structures; the additional languages and axioms
needed to extend to bi-automatic structures are easily added.) Once this is
done, there is an algorithm for solving the word problem in $\pi _{1}(M_{i})$
(alternatively one can use the fact that the groups are residually finite).
Now, for each $M_{i}$, Manning's algorithm in \cite{M} constructs a convex
finite sided polyhedral fundamental region $P_{i}$ in $\mathbb{H}^{3}$. His
algorithm describes the vertices, edges and faces of $P_{i}$. In addition,
it describes the face pairing isometries needed to recover $M_{i}$ from $%
P_{i}$. It is now straightforward to write down a new presentation for $\pi
_{1}(M_{i})$ with the face pairing isometries of $P_{i}$ as generators. In
what follows we will use these presentations rather than the ones obtained
from the initial triangulations.

Now we consider the tiling of $\mathbb{H}^{3}$ by translates of $P_{2}$.
Given a union $X$ of such translates, we let $star(X)$ denote the union of
all translates of $P_{2}$ which meet $X$ in at least one point, and let $%
star^{n}(X)$ denote the result of applying the star operation $n$ times to $%
X $. Thus $star^{2}(X)=star(star(X))$. It may sometimes be convenient to
write $star^{0}(X)=X$. We consider the sequence $star^{n}(P_{2})$, $n\geq 0$%
, of subsets of $\mathbb{H}^{3}$. As the union of these subsets equals $%
\mathbb{H}^{3}$, their diameters must tend to infinity. We will refine this
obvious fact in the following way. We will show in Lemma \ref{algorithm}
below that there is an algorithm to find a positive number $R$, such that $%
star^{n}(P_{2})$ contains the metric ball $B(P_{2},nR)$.

Assuming this lemma for the moment, we now proceed as follows. If $M_{1}$
and $M_{2}$ are homeomorphic, Mostow's Rigidity Theorem implies that they
must be isometric with their hyperbolic metrics. This yields an isometry $%
\varphi $ from $\mathbb{H}^{3}$ tiled by translates of $P_{1}$ to $\mathbb{H}%
^{3}$ tiled by translates of $P_{2}$. By composing with the action of an
element of $\pi _{1}(M_{2})$ on $\mathbb{H}^{3}$, we can suppose that $%
\varphi (P_{1})$ meets $P_{2}$. Let $d_{1}$ denote the diameter of $P_{1}$.
Then $\varphi (P_{1})$ must be contained in the metric ball $B(P_{2},d_{1})$%
. If $n$ is an integer such that $nR>d_{1}$, it follows that $\varphi (P_{1})
$ is contained in $star^{n}(P_{2})$. Let $\alpha $ be a face pairing
isometry of $P_{1}$. Then the isometry $\varphi \alpha \varphi ^{-1}$ of $%
\mathbb{H}^{3}$ pairs faces of $\varphi (P_{1})$. As $\alpha $ lies in $\pi
_{1}(M_{1})$, the isometry $\varphi \alpha \varphi ^{-1}$ lies in $\pi
_{1}(M_{2})$, and so preserves the tiling by translates of $P_{2}$. Hence $%
\varphi \alpha \varphi ^{-1}$ must send a certain translate of $P_{2}$ which
is contained in $star^{n}(P_{2})$ to another such. Pick a path in $%
star^{n}(P_{2})$ which joins the interiors of these two translates of $P_{2}$%
, does not meet any edges and is transverse to the faces, and let $N$ denote
the number of times this path meets a face. Then $\varphi \alpha \varphi
^{-1}$ can be written as a word in the face pairing generators of $\pi
_{1}(M_{2})$ of length $N$.

Let $k_{1}$ denote the maximum number of translates of $P_{2}$ around an
edge, and $k_{2}$ denote the maximum number of translates of $P_{2}$ around
a vertex, and let $k$ denote the maximum of $k_{1}$ and $k_{2}$. Then in $%
star(P_{2})$, we can join{\ any point in the interior of} $P_{2}$ to{\ any
point in the interior of} any translate of $P_{2}$, by a path which crosses
at most $k$ faces. Note that a translate of $P_{2}$ in $star(P_{2})$ may
have just a vertex in common with $P_{2}$. It follows immediately that in $%
star^{n}(P_{2})$, we can join $P_{2}$ to any translate of $P_{2}$, by a path
which crosses at most $nk$ faces. Hence, in $star^{n}(P_{2})$, we can join
any two translates of $P_{2}$, by a path which crosses at most $2nk$ faces.
We conclude that if there is an isomorphism from $\pi _{1}(M_{1})$ to $\pi
_{1}(M_{2})$, there is one which maps each face pairing generator of $\pi
_{1}(M_{1})$ to a word of length no more than $2nk$ in the face pairing
generators of $\pi _{1}(M_{2})$. Similarly we can find integers $m$ and $%
k^{\prime }$ such that if there is an isomorphism from $\pi _{1}(M_{2})$ to $%
\pi _{1}(M_{1})$, there is one which is inverse to the previous one, which
maps each face pairing generator of $\pi _{1}(M_{2})$ to a word of length no
more than $2mk^{\prime }$ in the face pairing generators of $\pi _{1}(M_{1})$%
. This gives us a finite list of possible maps from generators of $\pi
_{1}(M_{1})$ to elements of $\pi _{1}(M_{2})$, and vice versa. For each such
map we can check whether it is a homomorphism, and for each pair of such
maps, can check if their composite is the identity. Each of these checks
again requires the solution of the word problem. Thus we can check whether
or not there is an isomorphism between $\pi _{1}(M_{1})$ and $\pi _{1}(M_{2})
$.

Before giving the proof of Lemma \ref{algorithm}, we need to discuss
algorithms for bounding various distances related to the convex polyhedra $%
P_{i}$.

We will consider the upper half space model of $\mathbb{H}^{3}$, and all
coordinates used will be euclidean. Recall that in this model a hyperbolic
geodesic is either a vertical line, or a semi-circle in a vertical plane
centred at some point of the base plane $\mathbb{R}^{2}$ of the model. 
The two points which form the intersection of such a semi-circle
with the base plane $\mathbb{R}^{2}$ of the model will be called the
boundary of the geodesic. If a hyperbolic geodesic is a vertical line in
this model, its boundary consists of one point in $\mathbb{R}^{2}$ and one
point at infinity. Also a hyperbolic plane in this model is either a
vertical plane or is a euclidean hemisphere centred at some point of the
base plane $\mathbb{R}^{2}$ of the model. The circle which is the
intersection of such a hemisphere with the base plane $\mathbb{R}^{2}$ of
the model will be called the boundary of the hyperbolic plane. If a
hyperbolic plane is a vertical plane in this model, its boundary consists of
a line in $\mathbb{R}^{2}$ and one point at infinity.

We recall that Manning's paper \cite{M} produces the hemispheres which
contain the faces of $P_{i}$, and that each of these hemispheres has centre
with coordinates which are algebraic numbers and has euclidean radius which
is also an algebraic number. If the intersection of two of these hemispheres
is non-empty, it is a semi-circle whose boundary points have coordinates
which are algebraic numbers and whose euclidean radius is also an algebraic
number. Further the vertices of $P_{i}$, each of which is the intersection
of three of these hemispheres, also have coordinates which are algebraic
numbers. These numbers can be approximated to any required degree of
accuracy over the rational numbers (to lie within an \textquotedblleft
isolating interval" with rational endpoints) using standard methods of
symbolic computation, as described by Manning (with reference to \cite{BW}
and \cite{L}). The algorithmic process starts from the results of Manning's
algorithm, which gives coordinates (i.e.{\ minimal polynomials and isolating
intervals for their roots}) for the vertices of the polyhedra $P_{i}$, and
for the centres of the semi-circles and hemispheres defining the edges and
faces of $P_{i}$, and estimates of their radii. Suppose that all these
isolating intervals are of width at most $\epsilon $, and to begin, suppose
that $\epsilon $ has been chosen so that $1/2^{c+1}<\epsilon <1/2^{c}$. We
refer to this number as the error in our calculations.

{In the upper half space model, after estimating euclidean distances, we can
then estimate hyperbolic distances using the usual formulae.} For simplicity
in the following seven statements, we will say that a point in the upper
half space model of $\mathbb{H}^{3}$ is algebraic if its coordinates are
algebraic, that an infinite geodesic in this model is algebraic if its
boundary points are algebraic (we count $\infty $ as being algebraic here),
that a hemisphere in this model is algebraic if its euclidean centre (in the
base plane $z=0$) and radius are algebraic, and that a vertical plane is
algebraic if it contains{\ at least two finite algebraic points (or has at
least one finite algebraic boundary point)}. 
Finally a compact geodesic segment is  algebraic if its
endpoints are algebraic.

\begin{enumerate}
\item The distance between two distinct algebraic points is not
algorithmically computable, but we can compute a positive lower bound which
will suffice for our requirements. The error incurred here is at most $%
2\epsilon $.

\item For this point and the next, we use the upper half space
model of the hyperbolic plane $\mathbb{H}^{2}$. Finding the distance
between an algebraic point $X$ and a disjoint algebraic geodesic $\lambda $
in the hyperbolic plane can be algorithmically reduced to finding the
distance between two algebraic points as follows. Let $\mu $ denote the
semi-circle through $X$ which meets $\lambda $ at right angles {and has its
centre on the base line $\mathbb{R}$. We can find this centre by solving
quadratic equations} specifying that the centre is equidistant from $X$ and
the point $\lambda \cap \mu $, and that the line from the centre to $\lambda
\cap \mu $ is tangent to $\lambda $. Hence we can also find $\lambda \cap
\mu $. Now the required distance equals the distance from $X$ to the point $%
\lambda \cap \mu $, so we can apply 1).

\item In the same way, finding the distance between two disjoint algebraic
geodesics (without a common boundary point) in the hyperbolic plane can be
algorithmically reduced to 1).

\item In the same way, finding the distance between an algebraic point $X$
and a disjoint algebraic{\ hyperbolic plane} $\Pi $ in hyperbolic $3$--space
can be algorithmically reduced to 1).

\item In the same way, finding the distance between two disjoint algebraic {%
hyperbolic planes} whose boundaries are also disjoint can be algorithmically
reduced to 1).

\item To find a lower bound for the distance between an algebraic{\ geodesic}
$\lambda $ and a disjoint algebraic{\ hyperbolic plane} $\Pi $, such that
the boundaries of $\lambda $ and $\Pi $ are also disjoint, we will choose an
algebraic{\ hyperbolic plane} $\Sigma $ which contains $\lambda $, and is
disjoint from $\Pi $, so that their boundaries are also disjoint. Once such $%
\Sigma $ has been found, the distance between $\Sigma $ and $\Pi $, which
can be found as in 5), gives the required lower bound.

If $\lambda $ is a vertical line, $\Pi $ must be a hemisphere, and we choose 
$\Sigma $ to be the vertical plane through $\lambda $ which is orthogonal to
the vertical plane which contains both $\lambda $ and the centre of the
hemisphere $\Sigma $.

Otherwise $\lambda $ is a semi-circle with endpoints $a$ and $b$ in the
plane $z=0$. Now we apply the Moebius transformation $\varphi $ given by $%
\varphi (z)=(z-a)/(z-b)$, which takes $\lambda $ to the vertical line $%
\lambda ^{\prime }$ above the origin, and takes $\Pi $ to an algebraic
hyperbolic plane $\Pi ^{\prime }$. 
The distance between $\lambda
^{\prime }$ and $\Pi ^{\prime }$ can be bounded below as in the preceding
two paragraphs. As $\varphi $ is a hyperbolic isometry, this is the required
lower bound for the distance between $\lambda $ and $\Pi $.

\item Finding the distance between an algebraic geodesic segment $e$ and a
disjoint algebraic{\ hyperbolic plane} $\Pi $ in hyperbolic $3$--space can
be algorithmically reduced to the preceding cases as follows. Let $\lambda $
denote the geodesic which contains $e$. If $\lambda $ and $\Pi $ are
disjoint and do not have a common boundary point, we can apply 6) to find a
lower bound for the distance between them. This is also a lower bound for
the distance between $e$ and $\Pi $. Otherwise, the distance between $e$ and 
$\Pi $ equals the distance between $\partial e$ and $\Pi $, which reduces
the problem to 4).
\end{enumerate}

\begin{lemma}
\label{algorithm}There is an algorithm to find a positive number $R$, such
that $star^{n}(P_{2})$ contains the metric ball $B(P_{2},nR)$.
\end{lemma}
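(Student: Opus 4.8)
The plan is to exhibit an explicit $R$ built from the local geometry of the tiling, and then argue by induction on $n$ that any geodesic segment of length at most $nR$ starting in $P_2$ stays inside $star^n(P_2)$. The key quantitative input is a positive lower bound, call it $r$, on the hyperbolic distance from the ``deep interior'' of $P_2$ to its boundary faces --- more precisely, I want a number $r>0$ with the property that every point of $\mathbb{H}^3$ lying outside $star(P_2)$ is at hyperbolic distance at least $r$ from $P_2$. Equivalently, $B(P_2,r)\subseteq star(P_2)$. Once this is established for the single tile $P_2$, the same statement holds (by applying elements of $\pi_1(M_2)$, which act by isometries preserving the tiling) for every translate: $B(gP_2,r)\subseteq star(gP_2)$ for all $g$. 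Then I would take $R=r$ and induct: if $x\in B(P_2,nR)$, pick a geodesic from $P_2$ to $x$ of length at most $nR$; the point $y$ at arclength $(n-1)R$ along it lies in $B(P_2,(n-1)R)\subseteq star^{n-1}(P_2)$ by induction, so $y$ lies in some translate $gP_2\subseteq star^{n-1}(P_2)$, and the remaining subpath from $y$ to $x$ has length at most $R=r$, so $x\in B(gP_2,r)\subseteq star(gP_2)\subseteq star^n(P_2)$. This gives $B(P_2,nR)\subseteq star^n(P_2)$.

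The substance, then, is producing $r>0$ algorithmically. A point $x\notin star(P_2)$ lies in some translate $gP_2$ disjoint from $P_2$; in particular $x$ is separated from $P_2$ by at least one face of $P_2$, say the face carried by the hemisphere $H$. I would argue that the distance from $x$ to $P_2$ is bounded below by the distance from $x$ to $H$, which in turn is bounded below by the distance between $H$ and the nearest translate-face ``one layer out'' --- but it is cleaner to localize near edges and vertices. Concretely: if $x\notin star(P_2)$ then the geodesic from $x$ to $P_2$ enters $P_2$ through a face $f$, and just before entering it must cross the boundary of $star(P_2)$; near a generic interior point of a face of some tile in $star(P_2)$ the two adjacent tiles share that whole face, so crossing out of $star(P_2)$ there forces the path to have already travelled across a full adjacent tile, and the relevant lower bound is the minimum width of a tile across a pair of opposite... no --- the honest statement is that the complement of $star(P_2)$ is a union of closed tiles, each disjoint from $P_2$, so $d(x,P_2)\ge \min d(Q,P_2)$ over tiles $Q$ disjoint from $P_2$ but sharing a face, edge, or vertex with a tile meeting $P_2$; and by equivariance this minimum is realized among finitely many configurations near a single edge or vertex of $P_2$.

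Thus I reduce to: for each edge $e$ and each vertex $v$ of $P_2$, among the $k_1$ (resp. $k_2$) tiles around $e$ (resp. $v$), bound below the distance from $P_2$ to each such tile that does not meet $P_2$. Each such tile is a face-pairing translate $g_1g_2\cdots g_j P_2$ with $j\le k$, bounded by hemispheres and vertical planes with algebraic data obtained from Manning's output, and ``does not meet $P_2$'' is decidable by checking whether the (finitely many) defining faces separate. For a tile $Q$ disjoint from $P_2$, $d(Q,P_2)$ is bounded below by $d(f_Q,f_P)$ for suitable separating faces $f_Q\subseteq Q$, $f_P\subseteq P_2$, and these are algebraic hyperbolic planes (or, after intersecting with the polyhedra, algebraic polygons whose sides are algebraic geodesic segments), so the distance can be bounded below using items 4)--7) of the list above, reducing ultimately to the computable lower bound of item 1). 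Taking $r$ to be the minimum over this finite list of positive lower bounds, and $R=r$, completes the construction.

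The main obstacle I anticipate is the geometric bookkeeping in the reduction step: verifying carefully that the complement of $star^n(P_2)$ really is ``$r$-far'' from $P_2$ --- i.e. that a path escaping $star^n(P_2)$ cannot sneak out near a low-dimensional stratum (an edge or vertex where many tiles meet) at small cost. Handling the vertex and edge cases is exactly why the constants $k_1,k_2$ (and the earlier remark that a tile in $star(P_2)$ may meet $P_2$ only in a vertex) were introduced; the clean way to finish is to observe that since only finitely many tiles meet $P_2$, and each meets it in a closed face, edge, or vertex, the set $star(P_2)$ is a closed neighbourhood of $P_2$ and its complement is closed and disjoint from the compact set $P_2$, hence at positive distance --- and then to extract that positive distance effectively from the algebraic description of the finitely many bounding hyperbolic planes via items 4)--7).
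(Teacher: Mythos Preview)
Your induction is correct and clean: once you have $r>0$ with $B(P_2,r)\subseteq star(P_2)$, equivariance gives $B(gP_2,r)\subseteq star(gP_2)$, and stepping along a geodesic in increments of $r$ yields $B(P_2,nr)\subseteq star^n(P_2)$. The algorithmic production of $r$ is also sound in principle: $\partial star(P_2)$ is a finite union of faces of the finitely many tiles in $star(P_2)$, each described by algebraic data obtained by applying the (algebraic) face-pairing isometries to the faces of $P_2$, and each such face disjoint from $P_2$ has a positive, effectively lower-boundable distance to $P_2$ via the listed items.

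The paper's proof takes a different and somewhat more economical route. Rather than computing $d(P_2,\partial star(P_2))$, it sets $R$ equal to half the minimum distance between \emph{disjoint} pairs of cells (vertices, edges, faces) of $P_2$ itself. This means the only distances one ever estimates are between cells of the single polyhedron $P_2$, with no need to enumerate the (potentially many) tiles around each vertex of $P_2$ or to push the algebraic face data through face-pairing isometries. The paper then argues, for each $n$, that the $R$--neighbourhood of $\partial star^n(P_2)$ cannot reach any cell of a tile $Q\subseteq star^n(P_2)$ that is disjoint from $\partial star^n(P_2)$; since every cell of $Q$ lying in $star^{n-1}(P_2)$ is interior to $star^n(P_2)$, this gives $d(\partial star^n(P_2),\, star^{n-1}(P_2))\geq R$, and the containment $B(P_2,nR)\subseteq star^n(P_2)$ follows by summing. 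So your approach front-loads the work into computing $r$ (more tiles, more faces, more distance estimates) in exchange for a one-line inductive step, while the paper's approach keeps the distance computation confined to $P_2$ at the cost of a slightly more delicate local argument near $\partial star^n(P_2)$. Both are valid; the paper's choice avoids the bookkeeping you flagged as the main obstacle.
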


\begin{proof}
We will find $R$ such that for each $n\geq 1$, the $R$--neighborhood of $%
\partial star^{n}(P_{2})$, does not meet $star^{n-1}(P_{2})$. Thus any path
in $\mathbb{H}^{3}$ which starts on $\partial star^{n}(P_{2})$ and ends on $%
\partial star^{n-1}(P_{2})$ must have length at least $R$. By induction it
follows that any path which starts on $\partial star^{n}(P_{2})$ and ends on 
$\partial P_{2}$ must have length at least $nR$. It follows immediately that 
$star^{n}(P_{2})$ contains the metric ball $B(P_{2},nR)$, as required.

We first give a description of the exact calculation before considering the
error term. We use the above seven points to find positive lower bounds for
various distances.

The distance between disjoint vertices of $P_{2}$ can be{\ bounded below
using 1)}.

The distance between a vertex $v$ of $P_{2}$ and a disjoint edge $e$ of $%
P_{2}$ can be estimated as follows. Let $\lambda $ denote the geodesic which
contains $e$. As $v$ cannot lie on $\lambda $, it suffices to estimate the
distance of $v$ from $\lambda $. If $\lambda $ is a vertical line, this can
be done as in 2). Otherwise, as in 6), let $a$ and $b$ denote the endpoints
of $\lambda $, and apply the Moebius transformation $\varphi $ given by $%
\varphi (z)=(z-a)/(z-b)$. This takes $\lambda $ to a vertical line $\lambda
^{\prime }$, and takes $v$ to an algebraic point $v^{\prime }$, so we can
now estimate the distance of $v^{\prime }$ from $\lambda ^{\prime }$ as in
2).

The distance between a vertex $v$ of $P_{2}$ and a disjoint face $F$ of $%
P_{2}$ can be estimated using 4), as $v$ cannot lie in the plane which
contains $F$.

The distance between disjoint edges $e$ and $f$ of $P_{2}$ can be estimated
as follows. Let $\lambda $ and $\mu $ denote the geodesics which contain $e$
and $f$ respectively. If $\lambda $ meets $\mu $ at a finite point or at
infinity, the distance between $e$ and $f$ is equal to the distance between $%
\partial e$ and $\partial f$, which can be bounded as in 1). If $\lambda $
and $\mu $ are disjoint, and disjoint at infinity, we will find an algebraic
plane $\Pi $ which contains $\lambda $ and is disjoint from $\mu $, and is
also disjoint from $\mu $ at infinity. If $\lambda $ is a vertical line, so
that $\mu $ must be a semi-circle, we take $\Pi $ to be the vertical plane
through $\lambda $ which is parallel to the base line of the semi-circle $%
\mu $. If $\lambda $ is not a vertical line, then, as in 6), we can apply a
Moebius transformation which takes $\lambda $ to a vertical line, and takes $%
\mu $ to an algebraic geodesic.

The distance between an edge $e$ of $P_{2}$ and a disjoint face $F$ of $%
P_{2} $ can be estimated using 7), as $e$ must be disjoint from the plane
which contains $F$.

{Finally the distance between disjoint faces $E$ and $F$ of $P_{2}$ can be
estimated as follows. Let $\Pi _{E}$ and $\Pi _{F}$ denote the planes which
contain $E$ and $F$ respectively. If these planes are disjoint, and disjoint
at infinity, we can estimate the distance between them using 5), and this
will be a lower bound for the distance between $E$ and $F$. Otherwise, the
distance between $E$ and $F$ is bounded below by the distance between $E$
and $\Pi _{F}$. This last distance equals the distance between $\Pi _{F}$
and some edge of $E$, and so equals one of the numbers already estimated. }

Now let $R$ denote half the minimum of all these numbers.

Let $W$ denote a vertex, edge or face of $P_{2}$. Then the definition of $R$
implies that the $R$--neighborhood of $W$ meets only those vertices, edges
or faces of $P_{2}$ which meet $W$.

Hence if $Q$ is a translate of $P_{2}$ in $star^{n}(P_{2})$ which meets $%
\partial star^{n}(P_{2})$, then the $R$--neighborhood of $\partial
star^{n}(P_{2})$ does not meet any vertex, edge or face of $Q$ except those
which meet $\partial star^{n}(P_{2})$. In particular, it follows that the $R$%
--neighborhood of $\partial star^{n}(P_{2})$, does not meet $%
star^{n-1}(P_{2})$, as required. Note that $\partial star^{n}(P_{2})$ and $%
star^{n-1}(P_{2})$ are disjoint.

In the actual algorithm, when dealing with approximations, all the
calculations above incur increasing error, but the fact that the number of
operations is finite means that there is a constant $C>0$ such that the
error in the estimate of each of these numbers is at most $C \epsilon$, so
that $R$ must be replaced by $R-C\epsilon$. It is of course possible that $%
R<C\epsilon$ in which case the algorithm must restart, replacing $\epsilon$
by $\epsilon/2$, recalculating the coordinates in Manning's algorithm to
this increased degree of accuracy, and recalculating $R$. Continue to do so
until $R>C\epsilon$, and then replace $R$ by $R-C\epsilon$, once this number
is positive.
\end{proof}

\section{Algorithm to find the geometry}

In this section we consider closed orientable irreducible geometric 
$3$--manifolds, given by finite triangulations. There are eight geometries{\
(as discussed in Scott's article \cite{Sc})}, and it is well known that a
closed $3$--manifold can have a geometric structure modelled on at most one
of these geometries. This can be proved by exhibiting properties of the
fundamental groups which distinguish the geometries. For example only closed
manifolds modelled on $S^{3}$ can have finite fundamental group. The point
of what we do in this section is that we can decide algorithmically on which
geometry a given geometric manifold is modelled. Such an algorithm is
described by Manning in \cite[1.4]{M}, but here we provide a more algebraic
treatment.

As we are considering orientable irreducible $3$--manifolds, the geometry $%
S^{2}\times \mathbb{R}$ cannot occur. For the only closed orientable
manifolds modelled on this geometry are $S^{2}\times S^{1}$ and $\mathbb{R}%
P^{3}\#\mathbb{R}P^{3}$, neither of which is irreducible. We start by
listing the remaining seven geometries together with some selected
properties of the closed manifolds modelled on these geometries.

\smallskip \renewcommand{\arraystretch}{1.2} \noindent\ 
\begin{tabular}{|c|p{10.1cm}|}
\hline
Geometry & Selected properties of any closed orientable $3$--manifold $M$
modelled on given geometry \\ \hline\hline
$S^{3}$ & $\pi _{1}(M)$ is finite. \\ \hline
$\mathbb{E}^{3}$ & $\pi _{1}(M)$ is virtually $\mathbb{Z}^{3}$, and $M$ is
Haken. Any two-sided incompressible surface in $M$ must be a torus. \\ \hline
$\mathbb{H}^{3}$ & $\pi _{1}(M)$ has no subgroup isomorphic to $\mathbb{Z}%
^{2}$. \\ \hline
$\mathbb{H}^{2}\times \mathbb{R}$ & $M$ is a Seifert fibre space with
hyperbolic base orbifold. $M$ is Haken, and contains an embedded
incompressible hyperbolic surface. \\ \hline
$Nil$ & $M$ is a Seifert fibre space with Euclidean base orbifold. \\ \hline
$Solv$ & $M$ is Haken. Any two-sided incompressible surface in $M$ must be a
torus. \\ \hline
$\widetilde{SL_{2}\mathbb{R}}$ & $M$ is a Seifert fibre space with
hyperbolic base orbifold. A two-sided incompressible surface in $M$ must be
a torus. \\ \hline
\end{tabular}

\smallskip

A crucial fact for us is that if $M$ is modelled on one of the above seven
geometries, then $\pi _{1}(M)$ is bi-automatic, except in the cases when the
geometry is $Nil$ or $Solv$ (\cite[chapter 12]{DBAE} proves automaticity).
In order to apply the theory of bi-automatic structures, we first need to be
able to decide whether $M$ is modelled on $Nil$ or $Solv$.

The key topological algorithm we will need is that of Jaco and Oertel \cite%
{JO} which decides whether a given triangulated $3$--manifold $M$ is Haken.
Their paper also shows how to decide whether $M$ has an incompressible
surface which is a torus, and how to find such a torus. We will use these
algorithms several times in what follows.

As usual $T$ denotes the $2$--torus $S^{1}\times S^{1}$. We also need to be
able to decide whether{\ a compact orientable manifold $M^{\prime }$ is
homeomorphic to $T\times I$. If $M^{\prime }$ is irreducible,} this is a
special case of algorithm 9.7 of \cite{JT}. Note that the algorithms of Jaco
and Oertel also find essential discs, and in this case cutting along a
properly embedded disc gives a $3$--ball, which can be recognised (by
Rubinstein \cite{R} and Thompson \cite{T}, or by Perelman's solution of the
Poincar\'{e} conjecture).

\begin{lemma}
(\textit{cf.} Theorem 5.5 of \cite{Sc}) If $M$ is orientable and is obtained
from $T\times I$ by gluing $T\times \{0\}$ to $T\times \{1\}$ by some
homeomorphism $h$, then $M$ is geometric and is modelled on one of $\mathbb{E%
}^{3}$, $Nil$ or $Solv$.
\end{lemma}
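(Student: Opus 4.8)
The plan is to recognize $M$ as the mapping torus of a self-homeomorphism of $T$ and to read the geometry off the induced action on first homology.

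First I would set $M=M_h$, the mapping torus of $h$, and replace $h$ by a linear representative. Self-homeomorphisms of $T$ are classified up to isotopy by their action on $H_{1}(T;\mathbb{Z})$, so the matrix $A=h_{\ast}\in GL(2,\mathbb{Z})$ determines $h$ up to isotopy and hence $M$ up to homeomorphism; thus I may assume $h$ is the linear map $A$ on $T=\mathbb{R}^{2}/\mathbb{Z}^{2}$. Orientability of $M$ forces $h$ to preserve the orientation of $T$ (the two ends of $T\times I$ receive opposite boundary orientations), so $\det A=1$ and $A\in SL(2,\mathbb{Z})$. I would also record that $M$ is aspherical with universal cover $\mathbb{R}^{3}$, hence irreducible with torsion-free fundamental group $\pi_{1}(M)=\mathbb{Z}^{2}\rtimes_{A}\mathbb{Z}$, and that $M$ is Haken since the fibre $T$ is two-sided and incompressible (the fibre subgroup $\mathbb{Z}^{2}$ injects into $\pi_{1}(M)$). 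It therefore suffices to exhibit $\pi_{1}(M)$ as a torsion-free cocompact lattice in the isometry group of one of $\mathbb{E}^{3}$, $Nil$, $Solv$: the quotient of the model by this lattice is then a closed geometric manifold homotopy equivalent to $M$, and, both being Haken (or simply by Geometrization), it is homeomorphic to $M$.

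Next I would split into cases according to the trace $t=\operatorname{tr}A$. If $|t|\le 1$ or $A=\pm I$, then $A$ has finite order $k$ (for $t=0,\pm1$ the orders are $4,6,3$); then $\pi_{1}(M)$ contains $\mathbb{Z}^{2}\times k\mathbb{Z}\cong\mathbb{Z}^{3}$ with finite index, so it is a torsion-free crystallographic group and $M$ is flat, i.e. modelled on $\mathbb{E}^{3}$. If $|t|=2$ but $A\ne\pm I$, then $A$ is $\pm$ a nontrivial unipotent, conjugate over $\mathbb{Z}$ to $\pm\left(\begin{smallmatrix}1&n\\0&1\end{smallmatrix}\right)$ with $n\ne 0$; a direct inspection of the resulting presentation shows $\pi_{1}(M)=\mathbb{Z}^{2}\rtimes_{A}\mathbb{Z}$ contains a cocompact lattice of the Heisenberg group $Nil$ with finite index, hence is itself a torsion-free cocompact lattice in $\operatorname{Isom}(Nil)$, so $M$ is modelled on $Nil$. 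If $|t|\ge 3$, then $A$ has real irrational eigenvalues $\lambda,\lambda^{-1}$; diagonalizing $A$ over $\mathbb{R}$ makes a generator of the $\mathbb{Z}$ factor act on $\mathbb{R}^{2}$ as $\pm\operatorname{diag}(\lambda,\lambda^{-1})$, which realizes $\pi_{1}(M)$ as a torsion-free cocompact lattice in $\operatorname{Isom}(Solv)$, where $Solv=\mathbb{R}^{2}\rtimes\mathbb{R}$ with $\mathbb{R}$ acting by $\operatorname{diag}(e^{s},e^{-s})$ (a point symmetry of $Solv$ absorbs the sign when the eigenvalues are negative); so $M$ is modelled on $Solv$.

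The main obstacle is not conceptual but lies in two bookkeeping points: getting the orientation convention right, so that orientability of $M$ genuinely yields $\det A=+1$ (this is exactly what excludes the remaining four geometries), and the $Nil$ case, where one must correctly identify $\mathbb{Z}^{2}\rtimes_{A}\mathbb{Z}$ for parabolic $A$ with a Heisenberg lattice and in particular check that it is \emph{not} virtually $\mathbb{Z}^{3}$ — equivalently, that the natural Seifert fibring of $M$ has Euclidean base orbifold but nonzero Euler number. With the trichotomy in hand one could also simply appeal to Theorem 5.5 of \cite{Sc}.
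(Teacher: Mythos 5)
Your proof is correct and follows the same route as the paper: the trace trichotomy for $A=h_*\in SL(2,\mathbb{Z})$, with $|tr(A)|<2$ or $A=\pm I$ giving $\mathbb{E}^3$, $|tr(A)|=2$ with $A\neq\pm I$ giving $Nil$, and $|tr(A)|>2$ giving $Solv$. You usefully spell out two points the paper's terse proof leaves implicit, namely that orientability forces $\det A=+1$ (without which the eigenvalue analysis for $|tr(A)|\le 1$ would fail) and the explicit realizations of $\pi_1(M)$ as cocompact lattices in each model geometry.
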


\begin{proof}
The action of $h$ on $H_{1}(T)\cong \mathbb{Z}^{2}$ is given by an integer $%
2\times 2$ matrix $A$. We consider the trace, $tr(A)$, of $A$. If $%
\left\vert tr(A)\right\vert <2$, or if $A=\pm I$, then $A$, and hence $h$,
must be periodic, so that $M$ is modelled on $\mathbb{E}^{3}$. If $%
\left\vert tr(A)\right\vert >2$, then $A$ has distinct real eigenvalues, so
that $M$ is modelled on $Solv$. If $\left\vert tr(A)\right\vert =2$, then $A$
has a repeated eigenvalue equal to $\pm 1$. So long as $A\neq \pm I$, this
implies that $M$ is modelled on $Nil$.
\end{proof}

\begin{remark}
Suppose that we have found an incompressible torus $T$ in an orientable $3$%
--manifold $M$, using the algorithms of normal surface theory, and that we
have checked that cutting $M\ $along $T$ yields a manifold homeomorphic to $%
T\times I$. In this situation, one can algorithmically calculate the action
of $h$ on $H_{1}(T)\cong \mathbb{Z}^{2}$, and so can decide on which
geometry $M$ is modelled.
\end{remark}

We now discuss the geometries $Nil$ and $Solv$ in more detail, and describe
an algorithm to decide which geometry occurs.

If $M$ is modelled on $Solv$, then $M$ is a bundle over a $1$--dimensional
orbifold with fibre the torus. Thus either $M$ is a bundle over $S^{1}$ with
fibre the torus, or $M$ is double covered by such a manifold.

If $M$ is modelled on $Nil$, there are several cases. If $M$ is Haken, then
it is a Seifert fibre space whose base orbifold is a torus, Klein bottle, $%
S^{2}(2,2,2,2)$ or $P^{2}(2,2)$. If this orbifold is not a torus, there is a
regular cover of $M$ of degree $2$ or $4$ whose base orbifold is a torus. If 
$M$ is not Haken, it is a Seifert fibre space whose base orbifold is $%
S^{2}(p,q,r)$, where $(p,q,r)$ is one of $(3,3,3)$, $(2,2,4)$ or $(2,3,6)$.
Now the orbifold fundamental group of $S^{2}(p,q,r)$ is the triangle group $%
\Delta (p,q,r)$, and in these cases, $\Delta (p,q,r)$ has a homomorphism to $%
\mathbb{Z}_{3}$, $\mathbb{Z}_{4}$ or $\mathbb{Z}_{6}$ with kernel isomorphic
to $\mathbb{Z}^{2}$. Thus there is a homomorphism of $\pi _{1}(M)$ to $\{1\}$%
, $\mathbb{Z}_{3}$, $\mathbb{Z}_{4}$ or $\mathbb{Z}_{6}$ whose kernel
determines a finite cover of $M$ which is a circle bundle over the torus. If 
$M$ is modelled on $Nil$ and is a circle bundle over the torus, then any
two-sided incompressible surface in $M$ must be a vertical torus, and if we
cut $M$ along such a torus, the result will be homeomorphic to $T\times I$.

We can apply the preceding paragraph as follows. For any triangulated closed
orientable irreducible $3$--manifold $M$, we can check whether there is a
homomorphism of $\pi _{1}(M)$ to $\{1\}$, $\mathbb{Z}_{2}$, $\mathbb{Z}_{3}$%
, $\mathbb{Z}_{4}$, $\mathbb{Z}_{2}\times \mathbb{Z}_{2}$, or $\mathbb{Z}%
_{6} $ whose kernel determines a finite cover of $M$ with infinite first
homology group. If this does not occur, then $M$ cannot be modelled on $Nil$
or $Solv$. If this does occur, we can check whether the covering contains an
incompressible torus, and if it does, we can check whether cutting along
this torus yields $T\times I$. If this occurs, the remark above tells us how
to determine the geometry on this finite cover and hence on $M$.

Thus we can decide whether or not $M$ is modelled on $Nil$ or $Solv$, and if
it is so modelled can decide which.

This reduces us to considering the five remaining geometries.

Given a triangulated closed orientable irreducible $3$--manifold $M$, we can
write down a presentation for $\pi _{1}(M)$. If we know that $\pi _{1}(M)$
is bi-automatic, we can algorithmically find a bi-automatic structure. {Part
of this structure is a regular (or rational) language of representatives for
the elements of the group. We can suppose, using Theorem 2.5.2 of \cite{DBAE}%
, that each group element has a unique representative in the language, and
it is easy to check whether a regular language is finite or infinite (see
for instance \cite{HU} Theorem 3.7). Thus we can algorithmically check
whether $\pi _{1}(M)$ is finite, and so can decide whether $M$ is modelled
on $S^{3}$.}

This reduces us to the four remaining geometries, which are $\mathbb{E}^{3}$%
, $\mathbb{H}^{3}$, $\mathbb{H}^{2}\times \mathbb{R}$ and $\widetilde{SL_{2}%
\mathbb{R}}$.

If $M$ is modelled on $\mathbb{E}^{3}$, there are several cases. In all
cases, $M$ is Haken. It is a Seifert fibre space whose base orbifold is a
torus, Klein bottle, $S^{2}(2,2,2,2)$, $P^{2}(2,2)$, or $S^{2}(p,q,r)$,
where $(p,q,r)$ is one of $(3,3,3)$, $(2,2,4)$ or $(2,3,6)$. Thus, as for $%
Nil$ geometry, there is a homomorphism of $\pi _{1}(M)$ to $\{1\}$, $\mathbb{%
Z}_{2}$, $\mathbb{Z}_{3}$, $\mathbb{Z}_{4}$, $\mathbb{Z}_{2}\times \mathbb{Z}%
_{2}$, or $\mathbb{Z}_{6}$ whose kernel determines a finite cover of $M$
whose base orbifold is the torus. But now this finite cover must be a $3$%
--torus, and so have free abelian fundamental group.

Thus to decide whether or not $M$ is modelled on $\mathbb{E}^{3}$, we simply
check whether there is a homomorphism of $\pi _{1}(M)$ to $\mathbb{Z}_{2}$, $%
\mathbb{Z}_{3}$, $\mathbb{Z}_{4}$, $\mathbb{Z}_{2}\times \mathbb{Z}_{2}$, or 
$\mathbb{Z}_{6}$ whose kernel is free abelian of rank $3$. For $M\ $is
modelled on $\mathbb{E}^{3}$, if and only if there is such a homomorphism.

This reduces us to the three remaining geometries, which are $\mathbb{H}^{3}$%
, $\mathbb{H}^{2}\times \mathbb{R}$ and $\widetilde{SL_{2}\mathbb{R}}$.

Using the bi-automatic structure on $\pi _{1}(M)$ one can check the answers
to the following questions. Does $\pi _{1}(M)$ have nontrivial centre? Does $%
\pi _{1}(M)$ have a subgroup of index $2$ with nontrivial centre? {(A
regular language for the centre of a bi-automatic group is constructed in
Cor. 4.4.1 of \cite{GS1}, and as noted earlier, it is easy to check whether
the language is infinite or finite, and in the latter case deduce how many
elements are in the centre. It is also straightforward to obtain a
bi-automatic structure for all subgroups of a given finite index, as in
Theorem 4.1.1 of \cite{DBAE}.)} If the answer to both questions is negative,
then $M$ must be hyperbolic. If we find a positive answer, then $M$ must be
modelled on one of $\mathbb{H}^{2}\times \mathbb{R}$ and $\widetilde{SL_{2}%
\mathbb{R}}$. To distinguish these cases, we use the facts that if $M$ is
modelled on $\widetilde{SL_{2}\mathbb{R}}$, then any incompressible surface
in $M$ must be a (vertical) torus, whereas if $M$ is modelled on $\mathbb{H}%
^{2}\times \mathbb{R}$, there must be horizontal incompressible surfaces in $%
M$ none of which can be a torus. Thus we can apply the algorithm of Jaco and
Oertel \cite{JO} to decide whether $M$ contains an incompressible surface
which is not a torus.

The referee pointed out an alternative algebraic approach to distinguishing
the $\mathbb{H}^{2}\times \mathbb{R}$ and $\widetilde{SL_{2}\mathbb{R}}$
cases. It is based on two observations. The first is that if $M$ is modelled
on one of these two geometries, then $M$ has a finite cover $M_{1}$ which is
a bundle over a surface with fibre the circle, such that the centre of $\pi
_{1}(M_{1})$ is infinite cyclic. The second is that if $M_{1}$ is such a
manifold, one can decide on which geometry $M_{1}$ (and hence $M$) is
modelled by checking whether the centre of $\pi _{1}(M_{1})$ injects into $%
H_{1}(M_{1})$. If it does, then the geometry is $\mathbb{H}^{2}\times 
\mathbb{R}$, and if it does not, then the geometry is $\widetilde{SL_{2}%
\mathbb{R}}$. It will be helpful to add a third observation. This is that if
the centre of $\pi _{1}(M)$ is infinite cyclic, and if {the centre of $\pi
_{1}(M)$ injects into $H_{1}(M)$, then }$M$ must be modelled on {$\mathbb{H}%
^{2}\times \mathbb{R}$. This is true because the assumption that the centre
of $\pi _{1}(M)$ injects into $H_{1}(M)$ immediately implies that the centre
of $\pi _{1}(M_{1})$ injects into $H_{1}(M_{1})$.}

To distinguish these two geometries algorithmically, we proceed as follows.
Suppose that $\pi _{1}(M)$ has nontrivial centre. (If not, replace $M$ by
the double cover which does have this property.) Now this centre, which we
denote by $A$, is infinite cyclic and the quotient {$\pi _{1}(M)/A$ is a
Fuchsian group $\Gamma ${. If $\Gamma $ is torsion free, it is a surface
group, and we take $M_{1}$ to equal $M$. We can decide whether $\Gamma $ is
torsion free. First $\Gamma $ is $\delta $--hyperbolic for some $\delta >0 $%
, and such a $\delta $ can be found algorithmically (see for instance \cite%
{P}). Now any torsion element of $\Gamma $ must have length at most $4\delta
+2$ (as in the proof of Theorem III.$\Gamma .3.2$ of \cite{Bridson-Haefliger}%
) and so the orders of all torsion elements can be found. If there is any
nontrivial torsion, we let $k$ denote the least common multiple of these
orders. It follows from Theorem 1 of \cite{EEK} that there is a torsion free
subgroup $\Gamma _{1}$ of index $2k$ in $\Gamma $. Thus there is a degree $%
2k $ cover $M_{1}$ of $M$, such that $\pi _{1}(M_{1})$ has centre $A$, and $%
\pi _{1}(M_{1})/A$ is the surface group $\Gamma _{1} $. {We can now
algorithmically find finite presentations for the (finitely many) subgroups
of $\Gamma $ of index $2k$, one of which is $\Gamma _{1}$. At this point one
could simply check each of these subgroups of $\Gamma $ to decide whether it
is torsion free, but it seems simpler to proceed in the following way.
Instead consider the index $2k$ subgroups of $\pi _{1}(M)$ with centre $A$
and quotient one of the index $2k$ subgroups of $\Gamma $. For each such
subgroup $G$ of $\pi _{1}(M)$, we check whether $A$ injects into the
abelianisation of $G$. If this does not occur for any of these subgroups we
immediately deduce that $M$ has $\widetilde{SL_{2}\mathbb{R}}$ geometry. If
this does occur for some such subgroup, say $\pi _{1}(M_{2})$, we can apply
the third observation above to see that $M_{2}$, and hence $M$, has $\mathbb{%
H}^{2}\times \mathbb{R}$ geometry, without ever needing to check whether the
quotient $\pi _{1}(M_{2})/A$ is torsion free. }}}

\end{document}